\newcommand{\mf}{\mathfrak}
\DeclareMathOperator{\lc}{H^0_{\mf m}}
\newcommand{\Ass}{\operatorname{Ass}}
\newcommand{\Ann}{\operatorname{Ann}}
\newcommand{\depth}{\operatorname{depth}}
\newcommand{\height}{\operatorname{ht}}
\newcommand{\Spec}{\operatorname{Spec}}
\newcommand{\Minh}{\operatorname{Minh}}
\newcommand{\length}{\ell}
\newcommand{\gr}{\operatorname{gr}}
\renewcommand{\phi}{\varphi}
\newtheorem{theorem}{Theorem}
\newtheorem{lemma}[theorem]{Lemma}
\newtheorem{proposition}[theorem]{Proposition}
\newtheorem{corollary}[theorem]{Corollary}
\newtheorem*{statement*}{Statement}
\newtheorem*{theorem*}{Theorem}
\newtheorem*{lemma*}{Lemma}
\newtheorem*{fact*}{Fact}
\theoremstyle{definition}
\newtheorem{definition}[theorem]{Definition}
\newtheorem*{definition*}{Definition}
\newtheorem*{example*}{Example}
\theoremstyle{remark}
\newtheorem{remark}[theorem]{Remark}
\newtheorem*{remark*}{Remark}
\begin{document}
%\bibliographystyle{amsplain}
%\tableofcontents
\title[Prime filtrations of the powers of an ideal]
{Prime filtrations of the powers of an ideal}

\author{Craig Huneke}
\author{Ilya Smirnov}
\address{Department of Mathematics\\
University of Virginia\\
 Charlottesville, VA 22904-4137 USA}

\date{\today}

\begin{abstract}
We prove that for all $n$, simultaneously, we can choose prime filtrations of $R/I^n$ 
such that the set of primes appearing in these filtrations is finite. 
\end{abstract}

\keywords{Prime filtrations, filtered modules, superficial elements}
\subjclass[2010]{13A30, 13C05, 13E05}

\thanks{The first author is partially supported by NSF grant 1259142.}

\maketitle
%\tableofcontents

\section{Introduction}

Let $R$ be a Noetherian ring, let $I$ be an ideal of $R$, and let $M$ be a finitely generated
$R$-module. In 1979, Markus Brodmann (\cite{Brodmann}) proved that the sets of associated primes of $M/I^nM$
stabilize for $n$ sufficiently large. In particular, the union of the associated primes of all $M/I^nM$ is a finite set. This result furthered results of Ratliff (\cite{Ratliff1}) proved in 1976, and has since been used by many authors.  In this paper we prove a result which
seems to have been overlooked: there is a finite set of primes such that for all $n$,
$M/I^nM$ has a prime filtration involving only primes in that finite set.  Moreover, in
the case in which $R$ has infinite residue fields, we prove that the
set can be chosen to be stable for large $n$. We recall
that a \it prime filtration \rm of an $R$-module $N$ is a filtration $0 = N_0 \subset N_1 \subset \ldots \subset N_n = N$ such that for all $i$, $N_i/N_{i-1}\cong R/P_i$ for
some prime $P_i$. By abuse of language, the set of all such $P_i$ are said to be the primes in the filtration.   

On the face of it, our result is a stronger result than that of Brodmann;
however, the results are not perfectly comparable. While it is true that all associated primes
are always among the primes in a prime filtration (thus our result does prove that the set
of associated primes of $M/I^nM$ is finite as $n$ varies), it is not clear that -- just because
the primes in a prime filtration of $M/I^nM$ stabilize -- also the set of associated primes
stabilizes.

We  also give an estimate on the number of times that a given prime appears in these special filtrations, 
and we use it to bound the length of the local cohomology modules, reproving a result of Ulrich and Validashti
(\cite{UlrichValidashti}).

We were motivated to prove such a result by the second author's research concerning the upper semi-continuity
of the Hilbert-Kunz multiplicity. At one point, it seemed necessary to prove that one could invert a single element
to make $R/I^n$ Cohen-Macaulay for all $n$. If $I= P$ is prime, one can obtain this result by applying generic flatness
to the associated graded ring of $P$, see Remark~\ref{genflatness}. In general, however, it is not clear.
We apply our result to prove results concerning the openness of loci where
all $M/I^nM$ are Cohen-Macaulay, and further generalize to more general types of
filtrations.  The method we use come from the theory of superficial elements. We include
an appendix which proves some results on superficial elements.  These results are
essentially folklore, but we could not find a reference for them in the generality we
need for this paper.

\medskip

\section{Main Results}

\medskip

\begin{definition}\label{filterdef}
Let $R$ be a Noetherian ring and $I$ an ideal of $R$. 
In this note, we say that a finite $R$-module $M$ is an $I$-filtered module
if it is endowed with a filtration $M = M_0 \supseteq M_1 \supseteq \ldots \supseteq M_n \supseteq \ldots $
such that
\begin{enumerate}
\item $IM_n \subseteq M_{n + 1}$ for all $n$,
\item $\oplus M_n$ is a finitely generated module over the Rees ring, $\mathfrak{R}(I): = \oplus I^n$.
\end{enumerate}
\end{definition}

\begin{remark}\label{filterprop}
It is worth remarking that an $I$-filtered module $M$ satisfies the following properties:
\begin{enumerate}
\item[(a)] $\gr(M) =\bigoplus_{n\geq 0} M_n/M_{n+1}$ is a finite $\gr_I(R)$-module,
\item[(b)] the filtrations $\{M_n\}$ and $\{I^nM\}$ are cofinal.
\end{enumerate}
The last condition means (under the assumption that $IM_n \subseteq M_{n + 1}$ for all $n$) 
that for any $n$ there exists $k_n$ such that $M_n \subseteq I^{k_n}M$ and $\lim\limits_{n \to \infty} k_n = \infty$.
\end{remark}

\begin{definition}
Let $R$ be a ring and $M$ be a module over $R$, with a prime filtration
\[
0 = M_0 \subset M_1 \subset \ldots \subset M_N = M.
\]
For a given prime prime ideal $P$, its multiplicity $\mu_P$ in the prime filtration $\{M_k\}$
is the number of quotients $M_k/M_{k-1}$ isomorphic to $R/P$.
Note that $\mu_P$ depends on the choice of filtration.
\end{definition}

\begin{remark}
Given a short exact sequence 
\[
0 \to L \to M \to N \to 0
\]
and prime filtrations $\{L_k\}$ and $\{N_k\}$ of $L$ and $N$ respectively, we can build a prime filtration of $M$
in the following way. 
Lift submodules $N_k$ to their preimages $N_k + L$ in $M$, then it is easy to check that 
\[
0 = L_0 \subset L_1 \subset \ldots \subset L_N = L = N_0 + L \subset N_1 + L \subset \ldots \subset M
\]
is a prime filtration of $M$. Also, note that for these fixed filtrations,
$\mu_P(M) = \mu_P (N) + \mu_P(L)$. 
\end{remark}

\begin{theorem}\label{filtrations}
Let $R$ be a Noetherian ring, $I$ be an ideal, and let $M$ be an $I$-filtered module. 
Then there exists a finite set of prime ideals $\Lambda$ such that for any $n$ 
there exists a prime filtration of $M/M_n$ that consists only of prime ideals in $\Lambda$.
For these filtrations we can estimate the number of times that any given prime in $\Lambda$ 
appears in the filtration of $M/M_n$ as $O(n^{\dim M})$.

Furthermore, if $R$ has infinite residue fields, we can choose prime filtrations with stabilizing
sets of prime factors, i.e. there exists a subset $\Lambda' \subseteq \Lambda$
such that for all $n$ sufficiently large $\Lambda'$ is precisely the set of prime factors
of the chosen filtration of $M/M_n$.
% There is a finite set of prime ideals $\Lambda$ such that for all $n$,  
% $M/M_n$ has a prime filtration where all the prime factors are in $\Lambda$.

% Moreover, there exists a subset $\Lambda' \subseteq \Lambda$ such that for all $n >> 0$, 
% there exists a prime filtration of $M/M_n$ such that 
% $\Lambda'$ is precisely the set of primes appearing in the prime filtration.
\end{theorem}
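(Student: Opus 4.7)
The plan is to pass to the associated graded module $\gr(M) = \bigoplus_n M_n/M_{n+1}$, which is a finitely generated graded module over the standard graded ring $\gr_I(R)$, and to run an induction on Krull dimension.

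\emph{Step 1: Reduce to the graded pieces of $\gr(M)$.} The short exact sequences
\[
0 \to M_{n-1}/M_n \to M/M_n \to M/M_{n-1} \to 0
\]
combined with the lifting construction in the remark above let us build, by induction on $n$, a prime filtration of $M/M_n$ from prime filtrations of the graded pieces $\gr(M)_k = M_k/M_{k+1}$ for $k = 0, \ldots, n-1$. The set of primes (resp.\ the multiplicity $\mu_P$) in the assembled filtration is the union (resp.\ sum) of those in the pieces. It therefore suffices to find a finite set $\Lambda$ and prime filtrations of each $\gr(M)_k$ using only primes from $\Lambda$.

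\emph{Step 2: Reduce to graded domains.} Fix a homogeneous prime filtration of $\gr(M)$ as a graded $\gr_I(R)$-module,
\[
0 = G_0 \subset G_1 \subset \cdots \subset G_N = \gr(M), \qquad G_i/G_{i-1} \cong (\gr_I(R)/\mathfrak{q}_i)(-d_i),
\]
with finitely many homogeneous primes $\mathfrak{q}_i$. Taking the degree-$k$ part presents each $\gr(M)_k$ as an iterated extension of the graded components $(\gr_I(R)/\mathfrak{q}_i)_{k - d_i}$, so the problem reduces to the following: for each graded domain $S = \gr_I(R)/\mathfrak{q}$ with $\mathfrak{q}$ homogeneous, show that the graded pieces $S_k$ admit prime filtrations using primes from a fixed finite set.

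\emph{Step 3: Induction on $\dim S$.} Write $\mathfrak{q}^\ast := \mathfrak{q} \cap R$, a prime of $R$ containing $I$, so $S_0 = R/\mathfrak{q}^\ast$. Since $\gr_I(R)$ is standard graded, so is $S$; hence either $S_1 = 0$ (in which case $S = S_0$, $S_k = 0$ for $k \geq 1$, and the single prime $\mathfrak{q}^\ast$ suffices), or we may choose any nonzero $x \in S_1$, which is automatically a non-zero-divisor on the domain $S$. The exact sequence
\[
0 \to S_{k-1} \xrightarrow{x} S_k \to (S/xS)_k \to 0,
\]
together with $\dim(S/xS) < \dim S$, lets us invoke the outer induction hypothesis (applied to a homogeneous prime filtration of $S/xS$): each $(S/xS)_k$ admits a prime filtration with primes from some finite set $\Lambda'$. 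An inner induction on $k$, with base $S_0 = R/\mathfrak{q}^\ast$, now produces prime filtrations of every $S_k$ using only primes from $\{\mathfrak{q}^\ast\} \cup \Lambda'$.

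\emph{Step 4: Multiplicity bound and stabilization.} For each prime $P$, unrolling the recursion $\mu_P(S_k) = \mu_P(S_{k-1}) + \mu_P((S/xS)_k)$ and inducting on $\dim S$ gives $\mu_P(S_k) = O(k^{\dim S - 1})$; assembled through Steps 1 and 2, this yields $\mu_P(M/M_n) = O(n^{\dim M})$. By construction the set of primes in the filtration of $M/M_n$ is monotone increasing in $n$ and lies inside the finite $\Lambda$, hence stabilizes. For the explicit stabilization claim in the infinite-residue-field case, one replaces the ad hoc $x \in S_1$ of Step 3 by the image in $I/I^2$ of a single superficial element $x \in I$ supplied by the appendix. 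I expect the main obstacle to be coordinating this choice so that a single $x$ works uniformly for all the graded-domain components $S_i$ at once and for all large $n$, ensuring that the stabilizing set of primes is genuinely the same $\Lambda'$ for all large $n$.
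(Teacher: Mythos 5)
Your proposal is correct in its main structure, but it takes a genuinely different route from the paper, so let me compare the two and then flag one point you should shore up and one point you can actually simplify.

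\textbf{Comparison.} The paper works entirely at the level of $R$-modules: it runs Noetherian induction on submodules of $M$, picks a superficial element $x$ of some order $m$ (Proposition~\ref{supexists}), and splices the short exact sequence
\[
0 \to M/\bigl((0:_Mx)+M_{n-m}\bigr)\to M/M_n\to M/(M_n+xM)\to 0
\]
coming from Proposition~\ref{supcolon}, splitting into the cases $0:_Mx=0$ and $0:_Mx\neq 0$. You instead pass immediately to $\gr(M)$ over $\gr_I(R)$, take a single homogeneous prime filtration, and reduce to the graded pieces $S_k$ of graded domains $S=\gr_I(R)/\mathfrak q$, where an arbitrary nonzero degree-one element $x$ is automatically a nonzerodivisor and $\dim(S/xS)<\dim S$. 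This cleanly trades the machinery of superficial elements (and the residue-field dichotomy that comes with them) for the elementary observation that graded domains always have degree-one nonzerodivisors. The recursion $\mu_P(S_k)=\mu_P(S_{k-1})+\mu_P((S/xS)_k)$ is the exact graded analogue of the paper's $\mu_M(n)=\mu_M(n-m)+\mu_{M/xM}(n)$, so the multiplicity bookkeeping is parallel; but your decomposition into the pieces $\gr(M)_k$ is not something the paper does.

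\textbf{A gap to fill.} Your multiplicity bound needs $\dim\bigl(\gr_I(R)/\mathfrak q_i\bigr)\le\dim M$ for every homogeneous prime $\mathfrak q_i$ appearing in the filtration of $\gr(M)$; since each $\mathfrak q_i$ lies in $\operatorname{Supp}\gr(M)$, this amounts to $\dim\gr(M)\le\dim M$. You use this silently when you ``assemble through Steps 1 and 2.'' This inequality is standard for good filtrations over a local ring, but in the global setting it requires an argument (e.g.\ localize at a prime of $R$ lying under a prime of $\gr_I(R)$ realizing $\dim\gr(M)$, and invoke the local result that a good filtration has $\dim\gr(M)_{\mathfrak p}=\dim M_{\mathfrak p}$). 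Without a line or a citation here, the exponent $\dim M$ is not justified. The paper sidesteps this entirely because its dimension drop happens for $M/xM$ as an $R$-module, not inside $\gr_I(R)$.

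\textbf{A point you should not hedge on.} Your final sentence worries about coordinating the choice of $x$ across the domain factors to get the stabilizing set $\Lambda'$. That worry is unnecessary, and in fact your construction proves something \emph{stronger} than the paper's second claim. Because you assemble the filtration of $M/M_n$ from the filtrations of $\gr(M)_0,\dots,\gr(M)_{n-1}$, the set of prime factors for $M/M_n$ is $\bigcup_{k<n}(\text{primes of }\gr(M)_k)$, which is a weakly increasing sequence of subsets of the finite set $\Lambda$. It therefore stabilizes with no hypothesis on the residue fields of $R$. You should state this plainly rather than reaching for superficial elements; as written, your last two sentences underclaim the result you have actually proved.
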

\begin{proof}
We prove all the claims by contradiction. There are some small differences in the argument in the second case in which $R$ has
infinite residue fields which we point out at the relevant points in the argument. 
By Noetherian induction there is a maximal submodule (under inclusion) $L$
such that the theorem is false in $M/L$ (for the induced filtration).  
Note that the theorem is trivially true for the zero module, so $L$ is a proper submodule.
Since quotients of $I$-filtered modules are also $I$-filtered, without loss of generality, we assume that $L = 0$. 
We reach the situation in which for every nonzero submodule $M'$ of $M$, the theorem holds with the induced filtration on $M/M'$.

By Proposition~\ref{supexists}, there is an integer $m\geq 1$ such that $M$ has a superficial element $x$ of order $m$. 
Moreover, if $R$ has infinite residue fields, $m$ can be taken to be $1$.
By Proposition~\ref{supcolon}, there exists an integer $N$ such that for any $n \geq N$ the sequence
\[
0 \to M/((0 :_M x) + M_{n-m})\to M/M_n \to M/(M_n + xM)\to 0
\]
is exact.
By our assumption, the theorem
 holds in $M/xM$. 
Hence, if $R$ has infinite residue fields, then there exists a finite set of prime ideals $\Lambda_1$ 
and an integer $N_1$, such that for all $n \geq N_1$,
$M/(xM + M_n)$ has a prime filtration with the set of factors $\Lambda_1$.
Otherwise, set $N_1 = 0$ and let $\Lambda_1$ be a finite set of primes such that, for any $n$, 
$M/(xM + M_n)$ has a prime filtration with factors from $\Lambda_1$.
Also, we can choose these filtrations and a constant $C > 0$
such that the multiplicity for every prime in $\Lambda_1$ 
in the chosen filtration of  $M/(M_n + xM)$ is at most $C n^{\dim M/xM}$.
 
If $x$ is a zerodivisor on $M$, then $0:_M x \neq 0$, hence the assertion is true in $M/(0:_M x)$.
Thus there exists a finite set of prime ideals $\Lambda_2$ and a constant $D > 0$ such that 
$M/((0 :_M x) + M_{n-m})$ has a prime filtration of the required form for any $n$
(or $n$ sufficiently large for the second part),
and the multiplicity of the appearing primes is bounded by $Dn^{\dim M/(0:_M x)}$.
Gluing the filtrations of $M/((0 :_M x) + M_{n-m})$ and $M/(M_n + xM)$, we obtain 
a prime filtration of $M/M_n$ with all factors in the finite set of primes $\Lambda = \Lambda_1 \cup \Lambda_2$
of multiplicities at most $Cn^{\dim M/xM} + Dn^{\dim M/(0:_M x)} \leq (C + D) n^{\dim M}$.
Also, if the prime factors of the filtrations of $M/((0 :_M x) + M_{n-m})$ and $M/(M_n + xM)$
stabilize, the glued filtrations will have the same property.

Otherwise, if $0 :_M x = 0$, 
then choose arbitrary prime filtrations of $M/M_{N + i}$, for $i = 0\ldots m-1$,
and let $\Lambda$ be the union of  $\Lambda_1$ and all prime factors appearing in these filtrations.
(Here $N$ and $\Lambda_1$ are as in the paragraphs above.)
Using the exact sequence
\[
0 \to M/M_{n-m}\to M/M_n \to M/(M_n + xM)\to 0
\]
and induction on $n$, one can easily see that for any $n \geq N$, 
$M/M_n$ has a prime filtration with the set of factors in $\Lambda$.

If $m = 1$, we only need to choose an arbitrary prime filtration 
of $M/M_{K}$ where $K = \max(N,N_1)$. 
Then again, by induction on $n$, one obtains prime filtrations of $M/M_n$
consisting exactly of the prime factors of $M/M_K$ and $\Lambda_1$.
 
For these filtrations, we can count the multiplicity of any fixed prime in $\Lambda$ in the following way.
Let $\mu_M (n)$ and $\mu_{M/xM} (n)$ be  multiplicities of this prime in the filtrations of 
$M/M_n$ and $M/(xM + M_n)$ that we just obtained.
For $n >> 0$, let $n - K = dm + i$, where $i < m$ is the remainder, then, by the construction,
\begin{align*}
\mu_M (n) &= \mu_M (n-m) + \mu_{M/xM} (n) = \mu_M(n-2m) + \mu_{M/xM} (n-m) + \mu_{M/xM} (n) = \ldots \\ 
&= \mu_M (K + i) + \sum_{j = 0}^{d} \mu_{M/xM}(n - jm) \leq \mu_M (K + i) + C \sum_{j = 0}^{d} (n - jm)^{\dim M/xM}.
\end{align*}
Moreover, there exists a constant $C'$ such that 
$C \sum_{j = 0}^{d} (n - jm)^{\dim M/xM} \leq C' n^{\dim M/xM + 1}$.
But $\dim M/xM \leq \dim M - 1$ since $x$ is a regular element on $M$,
so $\mu_M (n)$ has the required asymptotic behavior.

For the first part of the claim, we have showed that for all $n \geq N$ 
$M/M_n$ has a prime filtration with all factors from $\Lambda$.
But then the claim follows, since we can choose arbitrary prime filtrations of $M/M_n$ for $n < N$
and add their prime factors to $\Lambda$.
\end{proof}

\begin{remark}
In the general case, the proof above can be used to show that we can choose the filtrations that have
the sets of prime factors stabilizing periodically, i.e. there are finitely many finite sets 
$\Lambda_1, \ldots, \Lambda_m \subseteq \Spec R$ such that, for some $N \geq 0$ and all $i \geq 0$, 
$\Lambda_k$ is exactly the set of prime factors of $M/M_{N + ki}$ where $k = 1\ldots m$. 
\end{remark}

\begin{remark}
To appreciate the theorem better, let us give an example of prime filtrations with an infinite set of prime factors.

Consider $R = k[x,y]$ and $I = (x)$ (or, even, $I = 0$). 
For every $n > 0$ let $f_n$ be a nonzero element of $k[y]$. Then we can embed 
$R/(x)$ into $R/(x^n)$ by mapping $1 \mapsto f_nx^{n-1} + (x^n)$ and obtain an exact sequence
\[
0 \to R/(x) \to R/(x^n) \to R/x^{n-1}(f_n, x) \to 0.
\] 
Thus any minimal prime of $(f_n, x)$ is an associated prime of $R/x^{n-1}(f, x)$, 
so we can use it to build the filtration further.
Hence, we can choose $f_n$ to obtain infinitely many distinct minimal primes  
of $(f_n, x)$ and, thus, infinitely many prime factors.
\end{remark}

As a corollary, we recover the celebrated result of Ratliff (\cite{Ratliff1}).
\begin{corollary}
Let $R$ be a Noetherian ring and $I$ an ideal of $R$.
Then $\cup_n \Ass (I^n)$ is finite.
\end{corollary}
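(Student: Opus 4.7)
The plan is to derive the corollary as essentially a direct specialization of Theorem~\ref{filtrations}. Take $M = R$ with its $I$-adic filtration $M_n = I^n$. I need to verify that $R$ with this filtration is an $I$-filtered module in the sense of Definition~\ref{filterdef}: the containment $I \cdot I^n \subseteq I^{n+1}$ is immediate, and $\bigoplus_{n\geq 0} I^n$ is by definition the Rees ring $\mathfrak{R}(I)$, which is certainly finitely generated as a module over itself. So Theorem~\ref{filtrations} applies and produces a finite set $\Lambda \subseteq \operatorname{Spec} R$ with the property that, for every $n$, the module $R/I^n = M/M_n$ admits a prime filtration whose quotients are all of the form $R/P$ with $P \in \Lambda$.

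Next, I would invoke the standard fact that the associated primes of any module are contained in the set of prime factors of any prime filtration of that module. Concretely, if $0 = N_0 \subset N_1 \subset \cdots \subset N_N = N$ is a prime filtration with $N_i/N_{i-1} \cong R/P_i$, then the short exact sequences $0 \to N_{i-1} \to N_i \to R/P_i \to 0$ together with the inclusion $\Ass(N_i) \subseteq \Ass(N_{i-1}) \cup \Ass(R/P_i) = \Ass(N_{i-1}) \cup \{P_i\}$ give, by induction on $i$, the containment $\Ass(N) \subseteq \{P_1, \ldots, P_N\}$. Applied to $N = R/I^n$ and the filtration provided by the theorem, this yields $\Ass(R/I^n) \subseteq \Lambda$ for every $n$, so $\bigcup_n \Ass(R/I^n) \subseteq \Lambda$ is finite.

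There really is no serious obstacle here: Theorem~\ref{filtrations} already contains all of the content, and the step from a common finite set of prime factors to a common finite set of associated primes is a routine consequence of the behavior of $\Ass$ in short exact sequences. The only minor care needed is checking that the $I$-adic filtration on $R$ fits the slightly restrictive Definition~\ref{filterdef}, but this is essentially a tautology since the Rees ring appears on both sides.
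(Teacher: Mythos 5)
Your proof is correct and is exactly the argument the paper intends; the paper states this corollary without proof precisely because it follows immediately from Theorem~\ref{filtrations} applied to $M = R$ with the $I$-adic filtration, combined with the standard containment of $\Ass$ in the prime factors of any prime filtration (a point the introduction already flags).
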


\begin{corollary}\label{powCM}
Let $R$ be an excellent ring and $I$ be an ideal of $R$.
Then there exists an element $f \notin \sqrt{I}$ such that 
$R_f/I^nR_f$ is Cohen-Macaulay for all $n$. 
\end{corollary}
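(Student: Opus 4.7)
The plan is to use Theorem~\ref{filtrations} together with prime avoidance to reduce to the case where $I$ is a primary ideal with prime radical, and then to finish via the generic flatness strategy sketched in Remark~\ref{genflatness}.

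I would begin by applying Theorem~\ref{filtrations} to $R$ with the $I$-adic filtration, obtaining a finite set of primes $\Lambda$, each containing $I$, such that every $R/I^n$ admits a prime filtration using only primes in $\Lambda$; in particular $\bigcup_n \Ass(R/I^n) \subseteq \Lambda$. Let $Q_1, \dots, Q_r$ be the minimal primes of $I$, all of which lie in $\Lambda$. Every other $P \in \Lambda$ strictly contains some $Q_k$ and hence, being prime, is contained in no $Q_k$; likewise, for $i \neq j$ the ideal $Q_i + Q_j$ is contained in no $Q_k$ since minimal primes are incomparable. Prime avoidance then produces $g_P \in P \setminus \bigcup_k Q_k$ and $g_{ij} \in (Q_i + Q_j) \setminus \bigcup_k Q_k$, whose product $f_1$ satisfies $f_1 \notin \sqrt{I}$, while in $R_{f_1}$ every ``embedded'' $P \in \Lambda$ becomes the unit ideal and the $Q_i R_{f_1}$ become pairwise comaximal. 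Consequently $I^n R_{f_1}$ has no embedded primes for any $n$ (they would otherwise pull back to embedded members of $\Lambda$, which contain $f_1$), and the Chinese remainder theorem gives
\[
R_{f_1}/I^n R_{f_1} \;\cong\; \prod_{i=1}^{r} R_{f_1}/I_i^n R_{f_1},
\]
where $I_i$ is the $Q_i R_{f_1}$-primary component of $I R_{f_1}$. A finite product of rings is Cohen-Macaulay iff each factor is, so the problem reduces to the primary case, one $I_i$ at a time.

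In the primary case $\sqrt{I} = Q$ with $R/Q$ an excellent domain, openness of the Cohen-Macaulay locus of $R/I$ together with the fact that $(R/I)_Q$ is Artinian lets me invert a further element not in $Q$ so that $R/I$ is Cohen-Macaulay of some dimension $d$. Then I would apply Grothendieck's generic freeness to the finitely generated graded $(R/Q)$-algebra
\[
\bar B \;=\; \gr_I(R)/Q\gr_I(R) \;=\; \bigoplus_{n \ge 0} I^n/(I^{n+1}+QI^n),
\]
finding an element $h \notin Q$ such that each $\bar B_n$ is free over $(R/Q)_h$. Since $Q/I$ is a nilpotent ideal of $R/I$, a Nakayama-type lifting promotes this to freeness of each $I^n/I^{n+1}$ over $(R/I)_h$, and, being free over the Cohen-Macaulay ring $(R/I)_h$ of dimension $d$, each $I^n R_h/I^{n+1} R_h$ is itself Cohen-Macaulay of dimension $d$. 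An easy induction on $n$ through the short exact sequences
\[
0 \to I^n R_h/I^{n+1} R_h \to R_h/I^{n+1} R_h \to R_h/I^n R_h \to 0
\]
together with the depth lemma then gives $R_h/I^n R_h$ Cohen-Macaulay for every $n$. Gluing the primary pieces, choosing the auxiliary elements outside every $Q_k$ (which the pairwise comaximality permits), produces the required $f$.

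The main obstacle will be the generic freeness step in the primary case: classical generic freeness requires a reduced base, whereas $R/I$ is typically non-reduced. The reduction to primary ideals via Theorem~\ref{filtrations} is precisely what makes the base $R/Q$ a domain and the kernel $Q/I$ nilpotent, enabling the lifting from $R/Q$ to $R/I$ via Nakayama and rendering the argument of Remark~\ref{genflatness} available in the required generality.
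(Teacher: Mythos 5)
Your proposal diverges substantially from the paper's proof and contains a real gap in the generic-freeness step.

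The paper's argument is much shorter: it picks a single minimal prime $P_1$ of $I$, inverts $s \in \bigcap_{i\geq 2}P_i\setminus P_1$ so that \emph{every} prime filtration from Theorem~\ref{filtrations} collapses to one whose factors are all $R/P_1$, inverts $t$ to make $R/P_1$ Cohen--Macaulay, and then concludes by a straightforward depth induction along the filtration $0\subset M_1\subset\cdots\subset R_f/I^nR_f$, where each quotient $M_{k+1}/M_k\cong R_f/P_1R_f$. Your route (kill embedded primes, make the $Q_iR_{f_1}$ pairwise comaximal, split by CRT, then handle each primary piece by generic freeness) only exploits the consequence $\bigcup_n\Ass(R/I^n)\subseteq\Lambda$ — Brodmann's theorem — rather than the filtration itself, and it requires more machinery where a one-line depth induction would do.

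The gap is the ``Nakayama-type lifting.'' You claim that if $\bar B_n = (I^n/I^{n+1})\otimes_{R/I}R/Q$ is free over $(R/Q)_h$ and $Q/I$ is nilpotent, then $(I^n/I^{n+1})_h$ is free over $(R/I)_h$. This implication is false: Nakayama lifts a basis of $M/JM$ to a \emph{generating set} of $M$, but does not give injectivity of the resulting surjection $(R/I)^r \twoheadrightarrow M$. For a concrete failure, take $R=k[x,y]$, $I=(x,y)^2$, $Q=(x,y)$: then $\dim_k R/I=3$, $\dim_k I/I^2=7$, so $I/I^2$ cannot be free over $R/I$, even though $I/I^2\otimes R/Q$ is free over the field $R/Q=k$; there is no further localization available. (Here the Corollary is trivially true — $R/I^n$ is Artinian — but the step you propose breaks.) This is exactly the obstruction you identify in your last paragraph; the Nakayama fix does not overcome it, and generic freeness genuinely needs the reduced (indeed, for the module conclusion, domain) base. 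The repair is to abandon the generic-freeness detour in the primary case and instead use the prime filtration: after localizing so that all factors are $R/Q$ and $R/Q$ is Cohen--Macaulay, the short exact sequences $0\to M_k\to M_{k+1}\to R/Q\to 0$ together with the depth lemma immediately give that $R/I^n$ is Cohen--Macaulay of dimension $\dim R/Q$ — which is precisely the paper's argument, applied before your CRT split rather than after.

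Minor remark: the CRT-based reduction itself, and the choice of $h_i\notin\bigcup_k Q_k$, can be made to work (using that the $Q_kR_{f_1}$ are pairwise comaximal, one can replace $h_i$ by $h_i b + a$ where $a+b=1$, $a\in Q_iR_{f_1}$, $b\in\bigcap_{j\neq i}Q_jR_{f_1}$). But this reduction is not needed.
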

\begin{proof}
By the theorem, we can choose prime filtrations of all $R/I^n$ 
such that are only finitely many primes $P_i$, $1 \leq i \leq l$, appearing in those filtrations.

Without loss of generality, let $P_1$ be a minimal prime of $I$.
Then we can invert an element $s \in \cap_{i \geq 2} P_i \setminus P_1$ 
to make $P_1$ be the only prime appearing in the prime filtrations.
Since $R/P_1$ is excellent, its Cohen-Macaulay locus is open (\cite[7.8.3(iv)]{EGA}), 
so we can further localize at an element $t$ outside of $P_1$ to make it Cohen-Macaulay. 
We claim that $R/I^n$ are Cohen-Macaulay in the localization by $f = st$.

Let $n$ be arbitrary and let $0 \subset M_1 \subset \ldots \subset R_f/I^nR_f$
be the prime filtration of $R_f/I^nR_f$ induced by the original filtration, 
so that all the quotients are isomorphic to $R_f/P_1R_f$.
Now, if $\mf q$ is an arbitrary prime ideal containing $P_1$, it is easy to prove by induction 
that $(M_k)_\mf q$ are Cohen-Macaulay.  
\end{proof}

\begin{remark}\label{genflatness}
If $I = P$ is a prime ideal, then one can easily deduce the corollary from Generic Freeness (\cite[22.A]{Matsumura}).
Namely, since $R/P$ is an excellent domain, we can invert an element and assume that it is regular.

Now $\gr_P (R)$  is a finitely generated $R/P$-algebra, so by Generic Freeness 
we can invert an element of $R/P$ and make it free over the regular ring $R/P$. 
It follows that $P^n/P^{n+1}$ are projective $R/P$-modules for all $n$.
Then, using the sequences 
\[
0 \to P^n/P^{n+1} \to R/P^{n+1} \to R/P^n \to 0,
\]
we get that all residue rings $R/P^n$ are Cohen-Macaulay in this localization.
\end{remark}

For an ideal $I$, let $\Minh (I)$ to be the set of minimal primes $P$ of $I$ such that 
$\dim R/P = \dim  R/I$.

\begin{corollary}\label{loceqCM}
Let $R$ be a locally equidimensional excellent ring and $I$ be an ideal of $R$.
Then there exists an element $f \notin \cup \Minh (I)$ such that 
$R_f/I^nR_f$ is Cohen-Macaulay for all $n$. 
\end{corollary}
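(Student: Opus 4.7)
My plan is to follow the strategy of Corollary~\ref{powCM}, but to retain the entire set $\Minh(I)$ in place of a single minimal prime of $I$.

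First, I would invoke Theorem~\ref{filtrations} applied to $R$ with its $I$-adic filtration to obtain a finite set of primes $\Lambda$ containing every factor of a prime filtration of each $R/I^n$. Every such prime contains $I$, and in particular $\Minh(I) \subseteq \Ass(R/I) \subseteq \Lambda$. For any $P \in \Lambda \setminus \Minh(I)$ and any $Q \in \Minh(I)$ the inclusion $I \subseteq P$ together with the minimality of $Q$ over $I$ forces $P \not\subseteq Q$ (else $P = Q$). Prime avoidance therefore produces
\[
s \in \bigcap_{P \in \Lambda \setminus \Minh(I)} P \,\setminus\, \bigcup_{Q \in \Minh(I)} Q,
\]
and after inverting $s$ every induced prime filtration of $R_s/I^n R_s$ has factors only among $\{R_s/QR_s : Q \in \Minh(I)\}$.

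Second, I would localize further so that each $R/Q$ with $Q \in \Minh(I)$ becomes Cohen-Macaulay. Because $R$ is excellent, the non-Cohen-Macaulay locus of the excellent domain $R/Q$ is a proper closed set $V(J_Q)$ with $J_Q \supsetneq Q$; and if $Q' \in \Minh(I)$ is distinct from $Q$, the inclusion $J_Q \subseteq Q'$ would force $Q \subseteq Q'$, violating minimality. A second application of prime avoidance yields
\[
t \in \bigcap_{Q \in \Minh(I)} J_Q \,\setminus\, \bigcup_{Q \in \Minh(I)} Q.
\]
Setting $f = st$, one has $f \notin \bigcup \Minh(I)$, the filtration factors of $R_f/I^n R_f$ all have the form $(R/Q)_f$ for some $Q \in \Minh(I)$, and each such factor is Cohen-Macaulay by the choice of $t$.

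The main obstacle is the concluding verification that $R_f/I^n R_f$ itself is Cohen-Macaulay. I plan to check this at each maximal ideal $\mathfrak{m}$ of $R_f$, imitating the depth-lemma induction at the end of the proof of Corollary~\ref{powCM}. It is precisely at this step that the hypothesis of local equidimensionality is essential: together with universal catenarity coming from excellence, it ensures $R_\mathfrak{m}$ is equidimensional and catenary, so that $\dim R_\mathfrak{m}/P R_\mathfrak{m} = \dim R_\mathfrak{m} - \height P$ for every prime $P \subseteq \mathfrak{m}$. Since by construction $\Min(IR_f) = \{QR_f : Q \in \Minh(I)\}$, the condition $\dim R/Q = \dim R/I$ together with local catenarity forces every filtration factor appearing at $\mathfrak{m}$ to have dimension equal to $\dim R_\mathfrak{m}/IR_\mathfrak{m}$. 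With all factors Cohen-Macaulay of this common top dimension, the depth-lemma induction yields that $(R_f/I^n R_f)_\mathfrak{m}$ is Cohen-Macaulay, completing the proof.
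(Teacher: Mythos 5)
Your proof is correct and follows essentially the same strategy as the paper's: localize once to eliminate all filtration primes outside $\Minh(I)$, localize again to make each $R/Q$ ($Q \in \Minh(I)$) Cohen--Macaulay, and then run the short-exact-sequence depth induction locally, with local equidimensionality and catenarity ensuring that every surviving filtration factor has the same dimension as $R_{\mathfrak m}/IR_{\mathfrak m}$. One minor (and arguably cleaner) difference: where the paper justifies the choice of $t$ via a height computation ($\height J_i > \height P_i = \height I$), you argue directly from minimality -- $J_{Q'} \subsetneq Q$ together with $Q' \subsetneq J_{Q'}$ would force $Q' \subsetneq Q$, contradicting that both are minimal over $I$ -- which avoids any global height considerations.
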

\begin{proof}
By the theorem, we can choose prime filtrations of all $R/I^n$ such that 
there are only finitely many primes $P_i$, $1 \leq i \leq l$, appearing in those filtrations.
Without loss of generality, let $\{P_1, \ldots, P_k\} = \Minh(I)$.

By prime avoidance we can find an element $t \in \bigcap_{i = {k +1}}^l P_i \setminus \bigcup_{i = 1}^k P_i$.
Then the induced prime filtrations in $R_t$ contain only $\Minh(I)$ as prime factors.
For $1 \leq i \leq k$, let $J_i$ be a preimage in $R$ of an ideal defining the non Cohen-Macaulay locus of $R/P_i$,
so $\height J_i > \height P_i = \height I$. 

Let $J = J_1 \cdots J_k$. We claim that there exists $s \in J \setminus \cup^k_1 P_i$.
If not, then for some $i, j$ there would be a containment $J_i \subseteq P_j$. 
But this is impossible since $\height J_i > \height I = \height P_j$.
Now, we let $f = st$ and prove that $R_f/I^nR_f$ is Cohen-Macaulay for all $n$.

Let $n$ be arbitrary and let $0 \subset M_1 \subset \ldots \subset R_f/I^nR_f$
be the prime filtration of $R_f/I^nR_f$ induced by the original filtration.
Since $R$ is locally equidimensional, for any prime $Q$ containing $I$, 
$\Minh (IR_Q)$ consists of the primes in $\Minh(I)$ contained in $Q$.
So we may localize at $Q$ and assume $R$ is local.

We prove by induction that $\depth M_k = \dim R/I$.
The base case of $M_1 = R/P_i$ is clear.
Now, consider the sequence 
\[0 \to M_k \to M_{k+1} \to R/P_i \to 0
\]
and apply the induction hypothesis.
 \end{proof}

\begin{corollary}\label{grCM}
Let $R$ be an excellent ring and $I$ be an ideal of $R$.
Then there exists an element $f \notin \sqrt{I}$ such that the associated graded ring
$\gr_{I_f}(R_f)$ is a Cohen-Macaulay module over $R_f/I_f$. 
If $R$ is locally equidimensional, we can choose $f \notin \cup \Minh(I)$.
\end{corollary}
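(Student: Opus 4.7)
The plan is to deduce this corollary directly from Corollary~\ref{powCM} (general case) and Corollary~\ref{loceqCM} (locally equidimensional case) by a standard application of the depth lemma to the short exact sequences
\[
0 \to I^n/I^{n+1} \to R/I^{n+1} \to R/I^n \to 0.
\]

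First, I would invoke Corollary~\ref{powCM} to choose $f \notin \sqrt{I}$ (respectively Corollary~\ref{loceqCM} to choose $f \notin \cup\Minh(I)$ in the locally equidimensional case) such that $R_f/I^n R_f$ is Cohen-Macaulay for every $n \geq 0$. I would then argue that under this choice, each graded piece $I_f^n/I_f^{n+1}$ of $\gr_{I_f}(R_f)$ is Cohen-Macaulay as an $R_f/I_f$-module, which is the natural interpretation of ``$\gr_{I_f}(R_f)$ is Cohen-Macaulay over $R_f/I_f$'' since $\gr_{I_f}(R_f)$ is not finitely generated over $R_f/I_f$.

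To verify this, fix $n$ and localize at an arbitrary prime $\mathfrak{q} \supseteq I_f$. Since $V(I_f^m) = V(I_f)$, each module $R_f/I^m R_f$ has the same support as $R_f/I_f$, so $\dim (R_f/I^m R_f)_\mathfrak{q} = \dim (R_f)_\mathfrak{q}/I_f (R_f)_\mathfrak{q} =: d$ for all $m$. By the choice of $f$, $\depth (R_f/I^m R_f)_\mathfrak{q} = d$ for every $m$. Applying the depth lemma to the localized sequence
\[
0 \to (I_f^n/I_f^{n+1})_\mathfrak{q} \to (R_f/I_f^{n+1})_\mathfrak{q} \to (R_f/I_f^n)_\mathfrak{q} \to 0
\]
yields $\depth (I_f^n/I_f^{n+1})_\mathfrak{q} \geq \min(d, d+1) = d$. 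Since $(I_f^n/I_f^{n+1})_\mathfrak{q}$ is annihilated by $I_f(R_f)_\mathfrak{q}$ its dimension is at most $d$, forcing equality, and depth over $R_f$ coincides with depth over $R_f/I_f$ for $R_f/I_f$-modules. Thus $(I_f^n/I_f^{n+1})_\mathfrak{q}$ is Cohen-Macaulay, and letting $\mathfrak{q}$ vary over all primes containing $I_f$ gives Cohen-Macaulayness of each graded component.

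There is essentially no technical obstacle here: once Corollary~\ref{powCM} or \ref{loceqCM} is available, the argument is a two-line application of the depth lemma. The only minor point to keep track of is that Cohen-Macaulayness is asserted componentwise, since the full associated graded ring is not finitely generated as an $R_f/I_f$-module; the reduction to this componentwise statement is immediate from the short exact sequences above.
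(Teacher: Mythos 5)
Your proposal is correct and follows essentially the same route as the paper: invoke Corollary~\ref{powCM} (or Corollary~\ref{loceqCM}) to make all $R_f/I^nR_f$ Cohen-Macaulay, then deduce Cohen-Macaulayness of each graded piece $I_f^n/I_f^{n+1}$ from the short exact sequences $0 \to I_f^n/I_f^{n+1} \to R_f/I_f^{n+1} \to R_f/I_f^n \to 0$. The paper states this more tersely, while you spell out the depth-lemma bookkeeping (including the dimension bound from annihilation by $I_f$), but the argument is the same.
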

\begin{proof}
By Corollary~\ref{powCM} (or Corollary~\ref{loceqCM} if $R$ is locally equidimensional), 
we can invert an element $f \notin \sqrt I$ and make all $R/I^n$ Cohen-Macaulay.
Note, that there are exact sequences
\[
0 \to \frac{I^nR_f}{I^{n+1}R_f} \to R_f/I^{n+1}R_f \to R_f/I^nR_f \to 0,
\]
so all $\frac{I^nR_f}{I^{n+1}R_f}$ are Cohen-Macaulay and the assertion follows.
\end{proof}

\begin{corollary}
Let $R$ be an analytically unramified ring, $I$ be an arbitrary ideal of $R$,
and $M$ be a finite $R$-module.
Then there is a finite set of prime ideals $\Lambda$ such that for all $n$ the module 
$M/\overline{I^n}M$ has a prime filtration where all prime factors are in $\Lambda$.

Furthermore, if $R$ is a locally equidimensional excellent ring, 
then there exists an element $f \notin \cup \Minh(I)$ such that 
$R_f/\overline{I^nR_f}$ are Cohen-Macaulay for all $n$. 
%and the associated graded ring $\bigoplus \overline{I^nR_f}/\overline{I^{n+1}R_f}$ is a Cohen-Macaulay.
\end{corollary}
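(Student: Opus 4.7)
The plan is to recognize the filtration $M_n := \overline{I^n}M$ as making $M$ an $I$-filtered module in the sense of Definition~\ref{filterdef}, after which the first claim becomes a direct application of Theorem~\ref{filtrations}, and the second becomes a transcription of Corollary~\ref{loceqCM}.

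To verify the $I$-filtered structure, the containment $IM_n \subseteq M_{n+1}$ is immediate from $I \cdot \overline{I^n} \subseteq \overline{I \cdot I^n} \subseteq \overline{I^{n+1}}$. The nontrivial condition is that $\bigoplus_n \overline{I^n}M$ be finitely generated over the Rees ring $\mathfrak{R}(I) = \bigoplus_n I^n$. This is where the analytically unramified hypothesis is essential: by Rees's theorem, $\bigoplus_n \overline{I^n}$ is module-finite over $\mathfrak{R}(I)$ whenever $R$ is (locally) analytically unramified. Since $M$ is finite over $R$ and $\bigoplus_n \overline{I^n}M$ is generated by $M$ over $\bigoplus_n \overline{I^n}$, the composite is finite over $\mathfrak{R}(I)$. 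Theorem~\ref{filtrations} then produces the finite set $\Lambda$ of primes appearing in prime filtrations of all $M/\overline{I^n}M$, which is the first claim.

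For the Cohen-Macaulay statement, I would apply the first part to $M = R$ and then repeat the proof of Corollary~\ref{loceqCM} with $I^n$ replaced by $\overline{I^n}$, using that integral closure commutes with localization, $\overline{I^nR_f} = \overline{I^n}R_f$, and that $\sqrt{\overline{I^n}} = \sqrt{I}$ so that $\Minh$ of these ideals coincides with $\Minh(I)$. Concretely, relabel $\Minh(I) = \{P_1,\dots,P_k\}$ inside the finite set $\{P_1,\dots,P_l\}$ from the first part; invert $t \in \bigcap_{i > k} P_i \setminus \bigcup_{i \leq k} P_i$ to trim the prime factors down to $\Minh(I)$, and then, as in Corollary~\ref{loceqCM}, invert a further element $s \in J_1 \cdots J_k \setminus \bigcup_i P_i$, where each $J_i$ defines the non-Cohen-Macaulay locus of the excellent domain $R/P_i$; prime avoidance, together with the height inequality $\height J_i > \height I$ coming from the locally equidimensional hypothesis, ensures such an $s$ exists. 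The depth induction along the prime filtration of $R_f/\overline{I^n}R_f$ at a prime $Q \supseteq I$ then goes through exactly as before, yielding $\depth R_f/\overline{I^nR_f} = \dim R_f/\overline{I^nR_f}$.

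The only genuinely nontrivial input is Rees's theorem supplying the $I$-filtered structure; all subsequent steps are mechanical adaptations. A subtle point worth checking is the global versus local form of Rees's theorem, but for Noetherian rings the equivalence between analytic unramifiedness of all localizations at maximal ideals and module-finiteness of $\bigoplus_n \overline{I^n}$ over $\mathfrak{R}(I)$ is standard, so this should not cause difficulty.
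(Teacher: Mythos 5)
Your proposal matches the paper's proof: verify the $I$-filtered conditions for $\{\overline{I^n}M\}$ (the first from $I\,\overline{I^n}\subseteq\overline{I^{n+1}}$, the second from Rees's theorem that $\bigoplus\overline{I^n}$ is module-finite over $R[It]$ when $R$ is analytically unramified), then invoke Theorem~\ref{filtrations}, and for the Cohen--Macaulay statement rerun the argument of Corollary~\ref{loceqCM}. The extra care you take in spelling out why $\bigoplus\overline{I^n}M$ is finite over $\mathfrak{R}(I)$ and why $\Minh$ is unchanged under integral closure is sound and only elaborates what the paper leaves implicit.
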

\begin{proof}
We will show that $\overline{I^n}M$ satisfies the conditions of Definition~\ref{filterdef}.

The first condition holds, since $I^{n + 1} \subseteq I\overline{I^n} \subseteq \overline{I^{n+1}}$.
Since $R$ is analytically unramified, the ring $\bigoplus_{n \geq 0} \overline{I^n}$
is a finite algebra over the Rees algebra $R[It]$ (\cite[Corollary 9.2.1]{HunekeSwanson})
so the second condition holds.

The second part of the proof is same as in Corollary~\ref{loceqCM}. 
\end{proof}

In \cite[Theorem 1.4, Lemma 1]{Rees2}, Rees showed that  $R$ is analytically unramified 
if and only if $\{\overline{I^n}\}$ is cofinal with $\{I^n\}$.
Via property (b) of Remark~\ref{filterprop} this highlights the necessity of the assumption.
%and \ref{grCM}.

\begin{remark}
We should note that one could modify the proof of Proposition~\ref{supexists} in our Appendix
and get that a superficial element for $R$ with respect to a filtration $I_n$ will exist 
if the Rees ring of the filtration $\{\overline{I^n}\}$
is Noetherian. However, this is also equivalent for $R$ to be analytically unramified,
so this will not lead to a generalization of the corollary above.
To prove this we first record a lemma for which it is hard to find a proof in print. However,
see \cite{Cowsik, Eliahou, Huneke}. The proof we record here is found in \cite{Huneke}. 
\end{remark}

\begin{lemma}
Let $\oplus_{n\geq 0} I_n$ be a Noetherian non-negatively graded ring, where $I_n$ form a decreasing chain of ideals in
$R = I_0$. Then there exists an
integer $l$ such that for all $n\geq 1$, $(I_l)^n = I_{ln}$.
\end{lemma}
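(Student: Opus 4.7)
The plan is to exploit the Noetherian hypothesis to obtain a finite algebra presentation for $\bigoplus_{n \geq 0} I_n$ over $R$, and then select $l$ based on the degrees of the generators. Since the irrelevant ideal $\bigoplus_{n > 0} I_n$ is a finitely generated ideal of the Noetherian graded ring $\bigoplus_{n \geq 0} I_n$, I can extract finitely many homogeneous $R$-algebra generators $f_1, \ldots, f_k$ of positive degrees $d_1, \ldots, d_k$. I would then take $l$ to be a common multiple of the $d_i$ (for instance $l = d_1 d_2 \cdots d_k$), so that $f_i^{l/d_i} \in I_l$ for each $i$.

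The inclusion $(I_l)^n \subseteq I_{ln}$ is automatic from the graded multiplication $I_a I_b \subseteq I_{a + b}$. For the reverse inclusion $I_{ln} \subseteq (I_l)^n$, I observe that $I_{ln}$ is spanned as an $R$-module by monomials $M = f_1^{a_1} \cdots f_k^{a_k}$ with $\sum_i a_i d_i = ln$, so it suffices to show that each such $M$ can be written as a product of $n$ elements of $I_l$. This reduces to a purely combinatorial claim: every $(a_1, \ldots, a_k) \in \mathbb{N}^k$ with $\sum_i a_i d_i = ln$ decomposes as a sum of $n$ vectors $(b_i^{(j)}) \in \mathbb{N}^k$, $j = 1, \ldots, n$, each satisfying $\sum_i b_i^{(j)} d_i = l$.

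I would attempt the combinatorial claim by induction on $n$, peeling off one sub-vector of weight $l$ per step. The peeling is easy whenever some coordinate $a_i$ is at least $l/d_i$, because the \emph{pure} sub-vector $(l/d_i)\, e_i$ then serves; this certainly holds when $n \geq k$, since $a_i < l/d_i$ for all $i$ would force $\sum a_i d_i < kl$. The hard part is the residual small-$n$ regime, in which every coordinate $a_i$ is small and one must produce a genuinely \emph{mixed} sub-vector of weight $l$; here the $(k-1)$-dimensional polytope $\{x \in \mathbb{R}^k_{\geq 0} : x_i \leq a_i,\ \sum x_i d_i = l\}$ is nonempty (it contains $a/2$) but may a priori lack integer points. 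To secure such a peeling in all cases one can enlarge $l$ to a sufficiently divisible multiple, invoking the classical fact that every lattice simplex admits a dilate with the integer decomposition property. Once this combinatorial ingredient is in place, the induction closes, the factorization $M \in (I_l)^n$ follows, and the identity $(I_l)^n = I_{ln}$ holds for all $n \geq 1$.
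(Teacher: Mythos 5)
Your proof is correct, but it takes a genuinely different route from the paper's, and the route you chose trades elementarity for a heavy external theorem. Both arguments begin the same way (finite generation of the irrelevant ideal, reduction to a combinatorial question about degree decompositions), but then they diverge. The paper works directly with products of the \emph{ideals} $I_1,\ldots,I_k$ rather than with chosen generators, writes $I_m=\sum I_1^{j_1}\cdots I_k^{j_k}$ with $\sum i j_i=m$, and makes the very specific choice $l=k\cdot k!$. The point of $k!$ is that whenever $\sum ij_i\geq k\cdot k!$, pigeonhole gives some $a$ with $aj_a\geq k!$, and then $q=k!/a$ is automatically an integer, so $I_a^q\subseteq I_{k!}$ can be split off cleanly. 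This gives $I_m=I_{m-k!}I_{k!}$ for $m\geq l$ in two lines, and iterating $k$ times yields $I_{ln}=I_{l(n-1)}I_l$, after which induction finishes. In contrast, you reduce to the assertion that every lattice point of $n\Delta_l$ (with $\Delta_l=\{x\geq 0:\sum x_id_i=l\}$) is a sum of $n$ lattice points of $\Delta_l$, and you resolve the hard low-$n$ regime by invoking the Bruns--Gubeladze--Trung / Ewald--Wessels theorem that a sufficiently large dilate of any lattice polytope has the integer decomposition property. That is a correct deduction (taking $l$ to be, say, $(k-1)\cdot\mathrm{lcm}(d_1,\ldots,d_k)$), but it outsources the combinatorial heart of the lemma to a non-elementary result, and the peeling/induction framing is a detour: once IDP is in hand, the full decomposition into $n$ lattice points of $\Delta_l$ is immediate, with no need to isolate the $n\geq k$ case. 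The paper's $k!$ device is precisely a hands-on, elementary substitute for invoking IDP.

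Two small corrections to your write-up. First, the witness you offer for nonemptiness of $\{x\in\mathbb{R}^k_{\geq 0}: x_i\leq a_i,\ \sum x_id_i=l\}$ should be $a/n$, not $a/2$; the point $a/2$ has weight $ln/2$, which equals $l$ only when $n=2$. (The polytope is nonempty in any case, as you need, so this does not affect the argument.) Second, calling the dilate-IDP theorem ``classical'' oversells its age; it is a well-known but comparatively recent result (Ewald--Wessels 1991, Bruns--Gubeladze--Trung 1997), and since the paper's proof avoids it entirely with a one-line number-theoretic trick, it is worth being aware that the elementary route exists.
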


\begin{proof} The ideal generated by all positive degree elements is finitely generated, say with generators up to degree $k$. 
Hence for all $m\geq k$, $I_m = \sum I_1^{j_1}I_2^{j_2}\cdots I_k^{j_k}$, where the sum ranges over all nonnegative integers
$j_1,...,j_k$ satisfying $\sum_i ij_i = m$. Set $l = k\cdot k!$. 

We first claim that for $m\geq l$, $I_m = I_{m-k!}I_{k!}$.  For if
$\sum ij_i = m\geq k\cdot k!$, then for some $1\leq a\leq k$, $aj_a\geq k!$. Note that $q = k!/a$ is an integer. 
Therefore,
\[I_m = I_1^{j_1}\cdots I_k^{j_k} = 
I_a^{q}I_1^{j_1}\cdots I_a^{j_a-q}I_{a+1}^{j_{a+1}}\cdots I_k^{j_k} \subseteq I_{m-k!}I_{k!},\] 
and the opposite inclusion is obvious.

We finish by proving $I_l^n = I_{ln}$ by induction on $n$. However,  $I_{ln} = I_{ln-k!}I_{k!} = I_{ln-2k!}I_{2k!} = \cdots = I_{ln-k\cdot k!}I_{k\cdot k!}
= I_{(n-1)l}I_l$
by above. So, the induction hypothesis finishes the proof. \end{proof}

The following proposition should be well-known, however we could not find a reference for it.

\begin{proposition}
Let $R$ be a Noetherian ring and $I$ be an ideal.
Then the Rees algebra $S = \bigoplus_{n \geq 0} \overline{I^n}$ is Noetherian if and only if $R$ is analyticaly unramified.
\end{proposition}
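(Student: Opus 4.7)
My plan is to prove the two implications separately. For the direction ``$R$ analytically unramified $\Rightarrow$ $S$ Noetherian,'' I would simply invoke \cite[Corollary 9.2.1]{HunekeSwanson}, which asserts that $\bigoplus_n \overline{I^n}$ is module-finite over the ordinary Rees algebra $R[It]$; since $R[It]$ is Noetherian, so is $S$. This is the same input already used in the analytically unramified corollary earlier in the excerpt, so it requires no new work.

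The substantive direction is ``$S$ Noetherian $\Rightarrow$ $R$ analytically unramified,'' and my strategy is to convert Noetherianity of $S$ into cofinality of $\{\overline{I^n}\}$ with $\{I^n\}$, and then invoke the theorem of Rees cited just before the preceding lemma. Concretely, I would first apply that lemma to the graded ring $S$ (which is precisely what it was recorded for) to extract an integer $l \geq 1$ with $\overline{I^{ln}} = (\overline{I^l})^n$ for every $n \geq 1$. Setting $J := \overline{I^l}$, the ideal $I^l$ is by construction a reduction of $J$, so by the classical theory of reductions there exists $c$ with $J^{n+1} = I^l J^n$ for all $n \geq c$; iterating gives $J^{c+k} = I^{lk} J^c \subseteq I^{lk}$ for every $k \geq 0$.

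Combining the lemma with this containment yields $\overline{I^{l(c+k)}} = J^{c+k} \subseteq I^{lk}$, so for any prescribed $m$ one can choose $k$ with $lk \geq m$ and conclude $\overline{I^N} \subseteq I^m$ for all sufficiently large $N$. This is exactly the required cofinality of $\{\overline{I^n}\}$ with $\{I^n\}$, and then the cited theorem of Rees delivers that $R$ is analytically unramified. The only step I expect to need care is the extraction of the exponent $l$ that simultaneously controls all integral closures, but that is precisely the content of the preceding lemma; the reduction-ideal computation and the final appeal to Rees are standard.
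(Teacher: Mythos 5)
Your proof is correct and follows essentially the same two-step plan as the paper: the forward direction is the same appeal to module-finiteness of $S$ over $R[It]$, and for the converse both arguments extract an exponent $l$ with $\overline{I^{ln}} = (\overline{I^l})^n$, use that $I^l$ is a reduction of $\overline{I^l}$ to get $\overline{I^{l(c+k)}} \subseteq I^{lk}$, and conclude cofinality, hence analytic unramifiedness by Rees. The one genuine difference is the source of the key identity $\overline{I^{ln}} = (\overline{I^l})^n$: you derive it from the lemma recorded immediately before the proposition (applied to the graded ring $\bigoplus_n \overline{I^n}$, which is Noetherian with $\overline{I^n}$ a decreasing chain of ideals, $\overline{I^0}=R$), whereas the paper instead cites Ratliff's theorem directly. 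Your route is arguably cleaner in context, since it actually uses the lemma the paper went to the trouble of proving and keeps the argument self-contained; the paper's citation buys nothing extra here. The only place to be slightly careful, which you do flag implicitly, is passing from the subsequence $N = l(c+k)$ to arbitrary large $N$ via $\overline{I^N} \subseteq \overline{I^{l(c+k)}}$ once $N \geq l(c+k)$ — the paper spells this out by writing $m = nk + r$, and it is worth making that reduction explicit in a final write-up.
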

\begin{proof}
If $R$ is analytically unramified, $S$ is module finite over the Rees algebra $R[It]$, so it is Noetherian.

Now, assume that $S$ is Noetherian. Then by \cite[Theorem~2.7, Corollary~4.5]{Ratliff2}
there exists $k$ such that for all $n \geq 1$, $\overline{I^{nk}} = (\overline{I^k})^n$.

Since $I^k$ is a reduction of $\overline{I^k}$, there exists $n_0$ such that 
$I^k(\overline{I^k})^n = (\overline{I^k})^{n+1}$ for all $n \geq n_0$.
Therefore, for any $n \geq n_0$
\[
\overline{I^{nk}} = I^{k(n - n_0)}(\overline{I^k})^{n_0} \subseteq I^{k(n - n_0)}.
\]

Now, let $m \geq 0$ be arbitrary. We divide $m = nk + r$, where the reminder $r < k$.
Hence
\[
\overline{I^{m}} \subseteq \overline{I^{nk}} \subseteq I^{k(n - n_0)} \subseteq 
I^{m - k(n_0 + 1)}.
\] 
Since $k(n_0 + 1)$ is a fixed number, we have shown that the filtrations $\{\overline{I^n}\}$ and $\{I^n\}$
are cofinal, thus, by \cite[Lemma 1]{Rees2}, $R$ is analytically unramified.
\end{proof}

Using the multiplicity estimates of the filtrations constructed by Theorem~\ref{filtrations},
we give a different proof of the existence of $\epsilon$-multiplicity introduced in \cite{UlrichValidashti}.
Here $\ell(M)$ denotes the length of $M$.

\begin{corollary}\label{epsilon}
Let $R$ be a Noetherian ring, $I$ be an ideal in $R$, and $M$ be an $I$-filtered module.
Then $\length (\lc (M/M_n)) = O(n^{\dim M})$. 

In particular, we have
\[
\epsilon(I, M) = \limsup_{n \to \infty} \frac{d!\length(\lc (M/I^nM))}{n^{\dim M}} < \infty. 
\]
\end{corollary}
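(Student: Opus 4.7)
The plan is to apply Theorem~\ref{filtrations} to $M$ and then use the left-exactness of local cohomology to translate the filtration multiplicity bound directly into a length bound. First, I would invoke Theorem~\ref{filtrations} to obtain a finite set $\Lambda \subset \Spec R$ and, for each $n$, a prime filtration $0 = N_0 \subset N_1 \subset \cdots \subset N_{s(n)} = M/M_n$ whose successive quotients have the form $N_i/N_{i-1} \cong R/P_i$ with every $P_i \in \Lambda$, and such that for each $P \in \Lambda$ the multiplicity $\mu_P(n)$ is at most $C_P \cdot n^{\dim M}$ for a constant $C_P$ independent of $n$.

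Next, I would use that $\lc(R/P) = 0$ whenever $P \neq \mf m$ (since any element of $\mf m \setminus P$ is a nonzerodivisor on $R/P$), while $\lc(R/\mf m) = R/\mf m$ has length one. Applying the left-exact functor $\lc$ to each short exact sequence $0 \to N_{i-1} \to N_i \to R/P_i \to 0$ gives
\[
\length(\lc(N_i)) \leq \length(\lc(N_{i-1})) + \length(\lc(R/P_i)).
\]
Summing through the whole filtration, only the quotients with $P_i = \mf m$ contribute, yielding $\length(\lc(M/M_n)) \leq \mu_{\mf m}(n) = O(n^{\dim M})$, which is the first assertion.

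For the ``in particular'' statement, I would observe that the $I$-adic filtration $M_n := I^n M$ trivially satisfies both axioms of Definition~\ref{filterdef}, so the bound above specializes to $\length(\lc(M/I^n M)) \leq C \cdot n^{\dim M}$ for some constant $C$; dividing by $n^{\dim M}$ and multiplying by $d!$ then bounds the limsup defining $\epsilon(I, M)$ by $d! \cdot C < \infty$. No real obstacle remains beyond Theorem~\ref{filtrations}: the only extra input is the elementary vanishing $\lc(R/P) = 0$ for non-maximal $P$, which collapses all filtration contributions except those from $P = \mf m$, so the multiplicity estimate from the theorem transfers directly to a length estimate.
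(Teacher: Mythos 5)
Your proposal is correct and follows essentially the same route as the paper: take the filtrations from Theorem~\ref{filtrations}, use (semi-)left-exactness of $\lc$ to reduce the length of $\lc(M/M_n)$ to a sum of $\length(\lc(R/P_i))$ over the prime filtration factors, and then invoke the multiplicity bound. Your explicit observation that $\lc(R/P)=0$ for $P\neq\mf m$ (so only $\mu_{\mf m}(n)$ survives) is a slight sharpening of the paper's phrasing, which just absorbs the constant $\sum_{P\in\Lambda}\length(\lc(R/P))$ without comment, but the substance of the argument is identical.
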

\begin{proof}
Since $\lc(-)$ is a semi-additive functor, we obtain that 
for a prime filtration $0 = L_0 \subset \ldots \subset L_k \subset \ldots \subset L_N = M/M_n$
\[
\length (\lc (M/M_n)) = \length (\lc(L_N)) \leq 
\length (\lc (L_{N-1})) + \length (\lc(R/P_N)) \leq \ldots \leq \sum_{i = 1}^N  \length (\lc(R/P_i)).
\]
Thus, if we take the filtrations obtained by Theorem~\ref{filtrations}, we get 
\[
\length (\lc (M/M_n)) \leq \sum_{P \in \Lambda} \mu_P (M/M_n)\length (\lc(R/P))
\leq \sum_{P \in \Lambda} Cn^{\dim M}\length (\lc(R/P)) = C n^{\dim M}.
\]

\end{proof}

%\makeatletter
%\@addtoreset{theorem}{section}
%\makeatother

\specialsection*{Acknowledgements}
We would like to thank Hailong Dao who suggested Corollary~\ref{epsilon}.

\begin{appendix}
\counterwithin{theorem}{section}

\section{Superficial elements for filtered modules.}

This appendix contains results which are well-known, but, surprisingly, we could
not find a reference for the generality we need.  

% \begin{definition}
% A finitely generated module $M$ is called $I$-filtered, if 
% for every $n$ $IM_n \subseteq M_{n+1}$, and $\gr(M) = \bigoplus_{n \geq 0} M_n/M_{n + 1}$
% is a finite $\gr_I(R)$-module.
% \end{definition}

\begin{definition}
Let $R$ be a ring, $I$ an ideal, and $M$ an $I$-filtered module.
We say that $x \in I^m$ is a superficial element for $M$ of order $m$, if 
there exists $c \in \mathbb N$ such that for all $n \geq c$, $(M_{n + m} :_M x) \cap M_c = M_n$.
\end{definition}

\begin{proposition}\label{supexists}
Let $R$ be a Noetherian ring, $I$ an ideal, and $M$ an $I$-filtered module.
Then $M$ has a superficial element of some order $m$.
Furthermore, if $R$ has infinite residue fields, then $m$ can be taken to be $1$.
\end{proposition}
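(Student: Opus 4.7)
The plan is to produce a superficial element by choosing an initial form in the associated graded module that is a non-zero-divisor modulo the irrelevant torsion. By Remark~\ref{filterprop}(a), $\gr(M) := \bigoplus_{n \geq 0} M_n/M_{n+1}$ is a finitely generated graded module over $G := \gr_I(R) = \bigoplus_{n\geq 0} I^n/I^{n+1}$; write $G_+ = \bigoplus_{n \geq 1} I^n/I^{n+1}$ for the irrelevant ideal. Note that $G$ is generated in degree $1$ over $G_0 = R/I$, since $I^n/I^{n+1} = (I/I^2)(I^{n-1}/I^n)$.

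First I would separate the $G_+$-torsion. Let $T \subseteq \gr(M)$ consist of those elements killed by some power of $G_+$. Since $\gr(M)$ is Noetherian, $T$ is finitely generated and annihilated by some $G_+^N$; being graded this forces $T$ to be concentrated in finitely many degrees, so there is $c_0$ with $T_n = 0$ for all $n \geq c_0$. Set $\bar M := \gr(M)/T$. The associated primes $\mathfrak p_1, \ldots, \mathfrak p_r$ of $\bar M$ are exactly the associated primes of $\gr(M)$ that do not contain $G_+$; in particular, because $G_+$ is generated in degree $1$, we even have $G_1 \not\subseteq \mathfrak p_i$ for each $i$.

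Next I would produce a homogeneous element $x^* \in G_m$ (for some $m \geq 1$) avoiding every $\mathfrak p_i$. In general, homogeneous prime avoidance applied to $G_+ \not\subseteq \mathfrak p_i$ yields such an $x^*$ after passing to a sufficiently divisible common degree $m$. If $R$ has infinite residue fields, one can take $m = 1$: each $\mathfrak p_i \cap G_1$ is a proper $R/I$-submodule of $G_1 = I/I^2$, and a standard genericity argument over the infinite residue fields of $R/I$ produces an $R$-linear combination of a finite generating set of $I$ whose class in $G_1$ lies outside every $\mathfrak p_i$. Lift $x^*$ to $x \in I^m$.

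Finally I would verify the superficial condition with $c := c_0$. Suppose $n \geq c$ and $y \in M_c$ satisfies $xy \in M_{n+m}$ but $y \notin M_n$. Let $k$ be maximal with $y \in M_k$; then $c \leq k < n$, and the initial form $\bar y \in M_k/M_{k+1} = \gr(M)_k$ is nonzero. Since $k \geq c_0$, the image of $\bar y$ in $\bar M$ is also nonzero. From $xy \in M_{n+m} \subseteq M_{k+m+1}$ we get $x^* \bar y = 0$ in $\gr(M)_{k+m}$, hence also in $\bar M$, contradicting that $x^*$ is a non-zero-divisor on $\bar M$. Hence $y \in M_n$, as required. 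The main obstacle is the sharpening to $m = 1$ in the infinite residue field case: one must handle finitely many associated primes $\mathfrak p_i$ simultaneously as conditions on the $R/I$-module $I/I^2$, and it is precisely this avoidance-in-a-vector-space step that needs the residue fields to be infinite.
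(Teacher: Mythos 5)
Your proof is correct and takes essentially the same approach as the paper. The only cosmetic difference is that you package the argument via the $G_+$-torsion submodule $T$ and its quotient $\bar M$, whereas the paper works directly with a primary decomposition $0 = N_1 \cap \cdots \cap N_r$ of $0$ in $\gr(M)$, separating the components into those whose primes contain $G_+$ and those whose primes do not; these are two dressings of the same idea (your $T$ is precisely the intersection of the primary components with primes not containing $G_+$, and the paper's explicit $c = d + n_0$ plays the role of your $c_0$). The prime-avoidance step, the infinite-residue-field refinement to degree $1$, and the final verification via initial forms all match the paper's proof.
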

\begin{proof}
The proof appears in \cite[Proposition~8.5.7]{HunekeSwanson}.

Let $0 = N_1 \cap \ldots \cap N_r$ be a primary decomposition of the zero submodule in $\gr(M)$.
For $i = 1\ldots r$, let $P_i$ be the associated prime of $\gr(M)/N_i$. 
Without loss of generality, $P_1, \ldots, P_s$ contain all elements of $\gr_I(R)$ of positive degree,
and $P_{s+1}, \ldots, P_r$ do not. 
Then there exists an integer $d$ such that $\gr_I(R)_{\geq d} \subseteq \Ann \gr(M)/N_i$ for all $i = 1, \ldots, s$.
Since $\gr(M)$ is a finitely generated $\gr_I(R)$-module, there exists a constant $n_0$ such that
$\gr(M)_{\geq n} \subseteq \gr_I(R)_{\geq n - n_0}\gr(M)$ for all $n \geq n_0$.
Thus there is a constant $c = d + n_0$ such that for all $i = 1,\ldots, s$
\[\gr(M)_{\geq c} \subseteq \gr_I(R)_{\geq d}\gr(M) \subseteq N_i.\]

By Prime Avoidance there exists a homogeneous element $h$ of positive degree in $\gr_I(R)$ 
that is not contained in any $P_i$ for $i > s$. Say $h = x + I^{m+1}$ for some $x \in I^m$.
If $R$ has infinite residue fields, this $m$ can be taken to be $1$.

Note that $M_n \subseteq (M_{n + m} :_M x) \cap M_c$ for any $n \geq c$. 
Suppose $n \geq c$ and there exists $y \in (M_n :_M x) \cap M_c \setminus M_{n-m}$.
Let $k$ be the largest integer such that $y \in M_k$. Then $c \leq k < n$.
In $\gr(M)$, $(x + I^{m+1}) \cdot (y + M_{k+1}) = 0$. Thus by the choice of $x + I^{m+1}$,
\[y+M_{k+1} \in N_{s+1} \cap \ldots \cap N_r.\]
By the choice of $y$, $y + M_{k+1} \in M_c \cap N_1 \cap \ldots \cap N_s$,  
hence, by the choice of $c$, $y + M_{k+1} = 0$, a contradiction with the choice of $k$. 

\end{proof}

\begin{proposition}\label{supcolon}
Let $R$ be a Noetherian ring, $I$ an ideal, and $M$ be an $I$-filtered module. 
%with a filtration cofinal to the filtration by the powers of $I$
Suppose $x$ is a superficial element for $M$ of order $m$, then 
$M_n :_M x = 0 :_M x + M_{n - m}$ for all sufficiently large $n$.
\end{proposition}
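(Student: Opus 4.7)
The plan is to combine an Artin--Rees argument applied to the submodule $xM \subseteq M$ with the defining superficial property of $x$. The inclusion $\supseteq$ is immediate: elements of $0 :_M x$ are killed by $x$, and for $w \in M_{n-m}$, iterating $IM_k \subseteq M_{k+1}$ together with $x \in I^m$ gives $xw \in I^m M_{n-m} \subseteq M_n$.

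For the reverse inclusion, I first reduce to the claim that there exists a constant $s$ with $xM \cap M_n \subseteq xM_{n-s}$ for all $n \geq s$. Granting this, let $y \in M_n :_M x$; then $xy \in xM \cap M_n \subseteq xM_{n-s}$, so $xy = xw$ for some $w \in M_{n-s}$. The decomposition $y = (y-w)+w$ places $y-w$ in $0 :_M x$ and $w$ in $M_{n-s}$. Moreover $xw = xy \in M_n$, so $w \in M_n :_M x$ as well. Once $n \geq c + s$, we have $n - s \geq c$, hence $w \in M_c$, and the superficial assumption gives $w \in (M_n :_M x) \cap M_c = M_{n-m}$. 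Thus $y \in 0 :_M x + M_{n-m}$ for all $n$ larger than $\max(c+s,\,c+m)$.

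To establish the reduction, I would use the finite generation of the Rees module. The graded $\mathfrak{R}(I)$-module $\bigoplus_n (xM \cap M_n)$ is a graded submodule of $\bigoplus_n M_n$, which is Noetherian by hypothesis, so it is itself finitely generated. Finite generation of a graded submodule forces $I$-stability of the underlying filtration: there exists $s$ such that $I \cdot (xM \cap M_n) = xM \cap M_{n+1}$ for all $n \geq s$. Iterating and using $xM \cap M_s \subseteq xM$,
\[
xM \cap M_n \;=\; I^{n-s}(xM \cap M_s) \;\subseteq\; I^{n-s}\cdot xM \;=\; x \cdot I^{n-s}M \;\subseteq\; xM_{n-s},
\]
where the final containment iterates $IM_k \subseteq M_{k+1}$ from $M_0 = M$.

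The main obstacle is reconciling the two shifts in the argument: the Artin--Rees shift $s$ and the order $m$ of the superficial element. Neither alone produces an element of $M_{n-m}$; finite generation of the Rees module only guarantees $w \in M_{n-s}$, which is in general too shallow, while the superficial property improves things only once the argument already sits inside $M_c$. The correct sequencing is first to use Artin--Rees to pass from $y$ to a representative $w$ modulo $0 :_M x$ that lies deep in the filtration, and then to invoke the superficial hypothesis to promote $w$ from $M_{n-s}$ all the way to $M_{n-m}$.
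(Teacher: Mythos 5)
Your proposal is correct, and the overall strategy agrees with the paper's: pass from $M_n :_M x$ to a representative modulo $0 :_M x$ that lies deep inside the filtration, and then invoke the superficial property to land in $M_{n-m}$. (The paper uses the modular law at this point -- since $0:_M x \subseteq M_n :_M x$, one has $M_n :_M x = (M_n :_M x)\cap M_c + 0:_M x$ -- which is just a compressed form of your explicit element-wise argument with $y = (y-w)+w$.) Where you genuinely diverge is in how the Artin--Rees-type containment is obtained. The paper uses cofinality to get $M_n \subseteq I^{k_n}M$, applies the classical Artin--Rees lemma to $xM \subseteq M$ with the $I$-adic filtration, and then translates back to the filtration $\{M_n\}$, concluding only that $xM\cap M_n \subseteq xM_c$ for $n \gg 0$. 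You instead observe that $\bigoplus_n(xM\cap M_n)$ is a graded submodule of the finitely generated $\mathfrak R(I)$-module $\bigoplus_n M_n$, hence itself finitely generated, which forces $I$-stability and yields the sharper bound $xM\cap M_n \subseteq xM_{n-s}$. This is a cleaner and more self-contained route: it bypasses the cofinality remark entirely and is really a direct proof of Artin--Rees for the given filtration. It also gives a shift that grows linearly in $n$, which is stronger than what the paper extracts (though both suffice for the proposition). Your bookkeeping of the two thresholds $c+s$ and $c+m$ is correct, since the superficial identity $(M_n:_Mx)\cap M_c = M_{n-m}$ only kicks in once $n-m \geq c$.
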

\begin{proof}
Since the filtration is cofinal with $\{I^nM\}$,
there exists $k_n$ such that $M_n \subseteq I^{k_n}M$.
Thus
\[x(M_n :_M x) = M_{n} \cap xM \subseteq I^{k_n}M \cap xM.\]
Now, by the Artin-Rees Lemma, there exists $e$ such that 
$I^{k_n}M \cap xM \subseteq I^{k_n - e}(xM)$.
Since $\{M_n\}$ is cofinal to $I^n M$ and $k_n$ grows without a bound as a function of $n$, 
$xI^{k_n - e}M \subseteq xM_c$ for $n$ sufficiently large. 
Thus, we obtain that $(M_n :_M x) \subseteq M_c + 0:_M x$.

Therefore, $(M_n :_M x) = (M_n :_M x) \cap (M_c + 0:_M x) = (M_n :_M x) \cap M_c + 0:_M x = 
M_{n - m} + 0:_M x$.
\end{proof}

\end{appendix}

\bibliographystyle{plain}
\bibliography{fpbib}

\begin{thebibliography}{10}

\bibitem{Brodmann}
M.~Brodmann.
\newblock Asymptotic stability of {${\rm Ass}(M/I^{n}M)$}.
\newblock {\em Proc. Amer. Math. Soc.}, 74(1):16--18, 1979.

\bibitem{Cowsik}
R.~C. Cowsik.
\newblock Symbolic powers and number of defining equations.
\newblock In {\em Algebra and its applications ({N}ew {D}elhi, 1981)},
  volume~91 of {\em Lecture Notes in Pure and Appl. Math.}, pages 13--14.
  Dekker, New York, 1984.

\bibitem{Eliahou}
S.~Eliahou.
\newblock Symbolic powers of monomial curves.
\newblock {\em J. Algebra}, 117(2):437--456, 1988.

\bibitem{EGA}
A.~Grothendieck.
\newblock \'{E}l\'ements de g\'eom\'etrie alg\'ebrique. {IV}. \'{E}tude locale
  des sch\'emas et des morphismes de sch\'emas. {II}.
\newblock {\em Inst. Hautes \'Etudes Sci. Publ. Math.}, (24):231, 1965.

\bibitem{Huneke}
C.~Huneke.
\newblock Unmixed ideals in 3-dimensional regular local rings.
\newblock In {\em Atas da 9a Escola de \'Algebra}, pages 5--24. Sociedade
  Brasileira de Matem\'atica, 1986.

\bibitem{HunekeSwanson}
C.~Huneke and I.~Swanson.
\newblock {\em Integral closure of ideals, rings, and modules}, volume 336 of
  {\em London Mathematical Society Lecture Note Series}.
\newblock Cambridge University Press, Cambridge, 2006.

\bibitem{Matsumura}
H.~Matsumura.
\newblock {\em Commutative algebra}, volume~56 of {\em Mathematics Lecture Note
  Series}.
\newblock Benjamin Cummings Publishing Co., Inc., Reading, Mass., second
  edition, 1980.

\bibitem{Ratliff1}
L.~J. Ratliff, Jr.
\newblock On prime divisors of {$I^{n},$} {$n$} large.
\newblock {\em Michigan Math. J.}, 23(4):337--352 (1977), 1976.

\bibitem{Ratliff2}
L.~J. Ratliff, Jr.
\newblock Notes on essentially powers filtrations.
\newblock {\em Michigan Math. J.}, 26(3):313--324, 1979.

\bibitem{Rees2}
D.~Rees.
\newblock A note on analytically unramified local rings.
\newblock {\em J. London Math. Soc.}, 36:24--28, 1961.

\bibitem{UlrichValidashti}
B.~Ulrich and J.~Validashti.
\newblock Numerical criteria for integral dependence.
\newblock {\em Math. Proc. Cambridge Philos. Soc.}, 151(1):95--102, 2011.

\end{thebibliography}

\end{document}